\newtheorem{thm}{Theorem}
\DeclareMathAlphabet{\bit}{OML}{cmm}{b}{it}
\def\<{\leqslant}           
\def\>{\geqslant}           
\def\d{\partial}
\def\wh{\widehat}
\def\Re{\mathrm{Re}}   
\def\Im{\mathrm{Im}}   
\def\mR{{\mathbb R}}    
\def\mC{\mathbb{C}}    
\def\Tr{\mathrm{Tr}}       
\def\rT{{\rm T}}        
\def\diam{\diamond}       
\def\bE{\mathbf{E}}    
\def\bra{{\langle}}
\def\ket{{\rangle}}
\def\re{{\rm e}}        
\def\rd{{\rm d}}        
\def\bJ{\mathbf{J}}
\def\br{\mathbf{r}}
\def\x{\times}
\def\ox{\otimes}
\def\fF{\mathfrak{F}}
\def\fR{\mathfrak{R}}
\def\fH{\mathfrak{H}}
\def\cF{\mathcal{F}}
\def\cK{\mathcal{K}}
\def\cI{\mathcal{I}}
\def\ups{\upsilon}
\def\Ups{\Upsilon}
\def\diag{\mathop{\rm diag}}    
\def\blockdiag{\mathop{\rm blockdiag}}    
\title{\LARGE \bf
A Quantum Karhunen-Loeve Expansion and Quadratic-Exponential Functionals for Linear Quantum Stochastic  Systems}
\author{Igor G. Vladimirov,\qquad
Ian R. Petersen,\qquad
Matthew R. James
\thanks{This work is supported by the Air Force Office of Scientific Research (AFOSR) under agreement number FA2386-16-1-4065 and the Australian Research Council under grant DP180101805.}
\thanks{The authors are with the
Research School of Electrical, Energy and Materials Engineering, College of Engineering and Computer Science,
Australian National University, Canberra, Acton, ACT 2601,
Australia, {\tt
igor.g.vladimirov@gmail.com, i.r.petersen@gmail.com, matthew.james@anu.edu.au.
}
}
}
\begin{document}

\maketitle

\begin{abstract}
This paper extends the Karhunen-Loeve representation from classical Gaussian random processes to quantum Wiener processes which model external bosonic fields for open quantum systems. The resulting expansion of the quantum Wiener process in the vacuum state is organised as a series of sinusoidal functions on a bounded time interval with statistically independent coefficients consisting of noncommuting position and momentum operators in a Gaussian quantum state. A  similar representation is obtained for the solution of a linear quantum stochastic differential equation which governs the system variables of an open quantum harmonic oscillator. This expansion  is applied to computing a quadratic-exponential functional arising as a performance criterion in the framework of risk-sensitive control for this class of open quantum systems.
\end{abstract}

\section{INTRODUCTION}

The Karhunen-Loeve (KL) representation \cite{GS_2004} provides a series expansion of a classical random process over a bounded time interval in terms of an orthonormal basis of deterministic functions with random coefficients. The basis is usually formed from the eigenfunctions of a self-adjoint integral operator whose kernel is the covariance function of the process, in which case, the resulting coefficients  are uncorrelated (and hence, independent for Gaussian processes). Similarly to the Ritz-Galerkin methods \cite{M_1950}, the KL approach employs the idea of meshless approximation of continuous time functions (as opposed to the time discretization in finite-difference schemes for numerical solution of ordinary  differential equations).

The covariance kernel of the standard Wiener process \cite{KS_1991} has sinusoidal eigenfunctions, which makes its KL representation  with such a basis particularly suitable  for the solution of a linear stochastic differential equation (SDE) driven by the Wiener process. The action of a linear input-output operator, associated with the SDE, reduces to a linear transformation of the random coefficients of the KL expansion \cite{GS_2004}.

The present paper extends this idea to linear quantum SDEs (QSDEs), which are driven by quantum Wiener processes whose role in the Hudson-Parthasarathy  quantum stochastic calculus \cite{HP_1984,P_1992,P_2015} (not only in regard to linear QSDEs) is similar to that of the standard Wiener process in the classical case. The quantum Wiener process on a symmetric Fock space \cite{PS_1972} represents bosonic quantum fields (such as quantised electromagnetic radiation), and the QSDEs model the interaction of open quantum systems with such fields. Both the system and field variables are time-varying operators on a system-field tensor-product Hilbert space, and their evolution is specified by the system Hamiltonian and system-field coupling operators. 
 Because of the noncommutative nature of these quantum variables, their statistical properties are described in quantum probabilistic terms \cite{H_2018,M_1995} which do not reduce to classical joint probability distributions.

The quantum KL (QKL) representation,  which is considered here for the quantum Wiener process, inherits the sinusoidal basis    from its classical predecessor. However,  the coefficients of the QKL expansion consist of noncommuting operators which satisfy the canonical commutation relations (CCRs) of the quantum mechanical  position and momentum operators \cite{S_1994}. We apply this QKL expension to the system variables of an open quantum harmonic oscillator (OQHO) (with a quadratic Hamiltonian and linear coupling),
governed by a linear QSDE, which constitutes a building  block of linear quantum systems theory \cite{NY_2017,P_2017}.
A more natural QKL expansion for the system variables is also obtained  by using the orthonormal eigenfunctions of the two-point quantum covariance kernel for the invariant Gaussian state  of the stable OQHO with vacuum input fields.

We then outline a combination of the QKL expansion of the system variables with symplectic techniques   \cite{VPJ_2018c} in application to computing a quadratic-exponential functional (QEF) \cite{VPJ_2018a}.
The QEF is an alternative (though closely related \cite{VPJ_2019}) version of the original quantum risk-sensitive cost \cite{J_2004,J_2005}. Its minimization (by an appropriate choice of a quantum controller or filter for a given quantum plant) makes the closed-loop system more conservative in the sense of large deviations of quantum trajectories \cite{VPJ_2018a} and more robust to quantum statistical uncertainties described in terms of quantum relative entropy \cite{OW_2010,YB_2009}   with respect to the nominal system-field state \cite{VPJ_2018b}.
These properties  of the QEF make its computation an important robust performance analysis problem in addition to the fact that similar problems arise 
in regard to the characteristic (or moment-generating) functions for quadratic Hamiltonians \cite{PS_2015} and the quantum L\'{e}vy area \cite{CH_2013,H_2018}.

The paper is organised as follows.
Section~\ref{sec:QKLW} develops a QKL representation for a multichannel quantum Wiener process.
Section~\ref{sec:stat} considers the statistical properties of the QKL coefficients when the quantum Wiener process is in the vacuum state.
Section~\ref{sec:Xsin} obtains a sinusoidal representation for the system variables of an OQHO as the solution of a linear QSDE driven by the quantum Wiener process in the QKL form.
Section~\ref{sec:QKLX} develops a QKL expansion for the system variables of the OQHO using their invariant multipoint Gaussian quantum state. 
Section~\ref{sec:QEF} applies the QKL representation to computing the QEF for linear quantum stochastic systems.
Section~\ref{sec:conc} provides concluding remarks.

\section{KARHUNEN-LOEVE REPRESENTATION OF QUANTUM WIENER PROCESSES}
\label{sec:QKLW}

Let $W:=(W_k)_{1\< k \< m}$ be a multichannel quantum Wiener process, organised as a column-vector  of an even number of self-adjoint operators $W_1(t), \ldots, W_m(t)$ on a symmetric Fock space $\fF$ \cite{P_1992}, which depend on time $t\>0$ and represent bosonic fields. In accordance with its continuous tensor-product structure \cite{PS_1972}, $\fF$ is endowed with a filtration in the form of an increasing family of subspaces $\fF_t$, so that $W_k(t)$ acts effectively on $\fF_t$ for any $t\>0$ and $k=1, \ldots, m$. The component  quantum Wiener processes satisfy the two-point CCRs
\begin{align}
\nonumber
    [W(s), W(t)^\rT]
    & := ([W_j(s), W_k(t)])_{1\< j,k\< m}\\
\label{WWcomm}
    & =
    2i\min(s,t)J,
    \qquad
    s,t\>0.
\end{align}
Here, $[\alpha,\beta]:= \alpha\beta - \beta\alpha$ is the commutator of linear operators,
\begin{equation}
\label{JJ}
    J:=  \bJ \ox I_{m/2}
\end{equation}
is an orthogonal real  antisymmetric matrix (so that $J^2 = -I_m$), where $\ox$ is the Kronecker product, $I_m$ is the identity matrix of order $m$, and
\begin{equation}
\label{bJ}
\bJ: = {\begin{bmatrix}
        0 & 1\\
        -1 & 0
    \end{bmatrix}}
\end{equation}
spans the subspace of antisymmetric matrices of order 2.  If $q$ and $p:= -i\d_q$ are the quantum mechanical position and momentum operators acting on the Schwartz space \cite{V_2002}, the vector $v:= {\scriptsize \begin{bmatrix}q\\ p\end{bmatrix}}$ has the CCR matrix $\frac{1}{2}\bJ$ in the sense that $[v, v^\rT] = i\bJ$ (in view of $[q,p] = i$). Therefore, if $(q_k, p_k)$, with $k=1, \ldots, \frac{m}{2}$, are conjugate position-momentum pairs on an appropriate tensor-product Hilbert space, then
the vector $r:= \sqrt{2}[q_1, \ldots, q_{m/2}, p_1, \ldots, p_{m/2}]^\rT$ of $m$ self-adjoint operators satisfies $[r, r^\rT] = 2iJ$ with the same matrix $J$ as in (\ref{WWcomm}), (\ref{JJ}).

The two-point CCR structure (\ref{WWcomm}) of the continuous-time quantum Wiener process $W$ can be achieved by using an auxiliary sequence of pairwise commuting vectors of quantum variables. More precisely, let $w_0,w_1,w_2,\ldots$ be vectors of $m$ self-adjoint operators  on a complex separable Hilbert space $\cF$
satisfying  the CCRs
    \begin{equation}
    \label{wwcomm}
      [w_j,w_k^\rT]
      =
      2i\delta_{jk}J,
      \qquad
      j,k=0,1,2,\ldots,
    \end{equation}
where $\delta_{jk}$ is the Kronecker delta. In particular, the commutativity  between the entries of $w_j$, $w_k$ for all $j\ne k$ holds when the entries of $w_0,w_1,w_2,\ldots$ are defined on
different Hilbert spaces $\cF_0, \cF_1, \cF_2, \ldots$, respectively  (which can be copies of a common Hilbert space) and are extended to the infinite-tensor-product space $\cF:= \bigotimes_{k=0}^{+\infty} \cF_k$.

Now, for a fixed but otherwise arbitrary time horizon $T>0$, consider the eigenfunctions
\begin{equation}
\label{feig}
    f_k(t)
    :=
    \sqrt{\tfrac{2}{T}}
    \sin(\omega_k t),
    \quad
    \omega_k
    :=
    \tfrac{\pi}{T} (k+\tfrac{1}{2}),
    \qquad
    k=0,1, 2,\ldots
\end{equation}
of the integral operator whose kernel is the covariance function of the standard Wiener process:
\begin{equation}
\label{eig}
    \int_0^T \min(s,t) f_k(t) \rd t
    =
    \lambda_k f_k(s),
    \qquad
    0\< s\< T,
\end{equation}
 with the eigenvalues related to the frequencies $\omega_k$ by \cite[p. 229]{GS_2004}
\begin{equation}
\label{lamom}
    \lambda_k = \tfrac{1}{\omega_k^2}.
\end{equation}
The eigenfunctions (\ref{feig}) are orthonormal in the Hilbert space $L^2([0,T])$ of square integrable functions on the time interval $[0,T]$:
\begin{equation}
\label{ffeig}
    \bra
        f_j,
        f_k
    \ket
    :=
    \int_0^T
    f_j(t)f_k(t)\rd t = \delta_{jk},
    \qquad
    j,k=0,1,2,\ldots.
\end{equation}
The kernel function in (\ref{eig}) is represented by an absolutely and uniformly convergent series
\begin{equation}
\label{minst}
    \min(s,t)
    =
    \sum_{k=0}^{+\infty}
    \lambda_k
    f_k(s)f_k(t),
    \qquad
    0\< s,t \< T,
\end{equation}
with $\sum_{k=0}^{+\infty} \lambda_k = \int_0^T t\rd t = \frac{1}{2}T^2$. 
Similarly to the KL representation of the standard Wiener process \cite[Eq. (16) on p. 229]{GS_2004}, consider its quantum counterpart
\begin{align}
\nonumber
    W(t)
    & =
    \cI
    +
    \sum_{k=0}^{+\infty}
    \sqrt{\lambda_k} f_k(t) w_k\\
\label{Weig}
    & =
    \cI+
    \sqrt{\tfrac{2}{T}}
    \sum_{k=0}^{+\infty}
    \tfrac{1}{\omega_k}
    \sin(\omega_k t)
    w_k,
\end{align}
where $\cI$ is a vector of $m$ copies of the identity operator on $\cF$. This infinite linear combination of the functions (\ref{feig}) (whose ``coefficients'' are the vectors $w_k$ with operator-valued entries) is
a vector of $m$ time-varying self-adjoint operators on $\cF$.
 Its two-point commutator matrix is computed\footnote{without using the particular sinusoidal structure of the eigenfunctions} by combining the bilinearity of the commutator with (\ref{wwcomm}), (\ref{minst}) as
\begin{align}
\nonumber
    [W(s),W(t)^\rT]
     & =
    \sum_{j,k=0}^{+\infty}
    \sqrt{\lambda_j \lambda_k}
    f_j(s)f_k(t) [w_j,w_k^\rT]\\
\nonumber
    & =
    2i
    \sum_{k=0}^{+\infty}
    \lambda_k
    f_k(s)f_k(t)
    J \\
\label{WWcomm1eig}
    & =
    2i\min(s,t) J,
    \qquad
    0\< s,t \< T,
\end{align}
which is identical to the commutation structure of the quantum Wiener process in (\ref{WWcomm}). In view of (\ref{lamom}), the orthonormality (\ref{ffeig}) allows the coefficients $w_k$ to be  recovered from $W$ in (\ref{Weig}) as
\begin{equation}
\label{wWeig}
    w_k
    =
    \omega_k
    \int_0^T
    f_k(t)
    (W(t)-\cI)
    \rd t,
    \qquad
    k = 0,1,2,\ldots.
\end{equation}
In fact, similarly to the classical case, by starting from the quantum Wiener process $W$ on the Fock space $\fF$ (so that the entries of the vectors $w_k$ in (\ref{wWeig}) are also defined on $\fF$), it follows that the CCRs (\ref{WWcomm}) lead to  (\ref{wwcomm}). Indeed, since the identity operator commutes with any operator, (\ref{WWcomm}), (\ref{eig})--(\ref{ffeig}) imply
\begin{align}
\nonumber
    [w_j, w_k^\rT]
    & =
    \omega_j
    \omega_k
    \int_{[0,T]^2}
    f_j(s)
    f_k(t)
    [W(s), W(t)^{\rT}]
    \rd s
    \rd t\\
\nonumber
    & =
    2i
    \omega_j
    \omega_k
    \int_{[0,T]^2}
    \min(s,t)
    f_j(s)
    f_k(t)
    \rd s
    \rd t
    J\\
\label{WWcomm2eig}
    & =
    2i
    \frac{\omega_j}{\omega_k}
    \bra
        f_j,
        f_k
    \ket
    J
    =
    2i
    \delta_{jk}
    J,
\end{align}
which reproduces the CCRs (\ref{wwcomm}). Therefore, in view of (\ref{WWcomm1eig}), (\ref{WWcomm2eig}),  in the framework of the expansion (\ref{Weig}),  the CCRs (\ref{WWcomm}) are equivalent to (\ref{wwcomm}), with both CCRs remaining valid regardless of a particular quantum state for the Wiener process. 

\section{STATISTICAL PROPERTIES OF THE COEFFICIENTS}
\label{sec:stat}

We will be concerned mainly with the case of fields in the vacuum state \cite{P_1992}. In terms of their behavior over the time interval $[0,T]$, this means that the quasi-characteristic functional (QCF) of the quantum Wiener process $W$  takes the form
\begin{equation}
\label{QCFeig}
    \bE \re^{i \int_0^T f(t)^\rT \rd W(t)}
    =
    \re^{-\frac{1}{2} \|f\|^2},
    \qquad
    f \in L^2([0,T], \mR^m),
\end{equation}
where $\bE \xi := \Tr(\rho \xi)$ is the expectation of a quantum variable $\xi$ over an underlying density operator $\rho$, and $\|f\|:= \sqrt{\int_0^T |f(t)|^2\rd t}$ is the $L^2$-norm for square integrable vector-valued functions on $[0,T]$.
\begin{thm}
\label{th:Omegaeig}
Suppose the quantum Wiener process $W$ is in the vacuum state in the sense of (\ref{QCFeig}). Then the vectors $w_k$ in (\ref{wWeig}) are statistically independent and are in a joint Gaussian quantum  state with zero mean and common covariance matrix
\begin{equation}
\label{Omegaeig}
    \Omega := I_m + iJ,
\end{equation}
with the matrix $J$ given by (\ref{JJ}), (\ref{bJ}),
so that
\begin{equation}
\label{EwEwweig}
    \bE w_k = 0,
    \quad
    \bE(w_jw_k^\rT) = \delta_{jk}\Omega,
    \qquad
    j,k=0,1,2,\ldots.
\end{equation}
\hfill$\square$
\end{thm}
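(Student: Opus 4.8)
The plan is to compute the joint quasi-characteristic functional (QCF) of the coefficient vectors $w_0,w_1,w_2,\ldots$ directly from the vacuum QCF (\ref{QCFeig}); its explicit form will simultaneously deliver the independence, the Gaussianity and the moments. The first step is to rewrite each $w_k$ in (\ref{wWeig}) as a quantum stochastic integral against $W$ with a \emph{deterministic} integrand. Integrating by parts with $g_k(t):=\int_t^T f_k(s)\rd s$, so that $g_k'=-f_k$ and $g_k(T)=0$, and using $W(0)=\cI$ from (\ref{Weig}) (so that $W-\cI$ vanishes at $t=0$ and both boundary terms drop), one obtains $\int_0^T f_k(t)(W(t)-\cI)\rd t=\int_0^T g_k(t)\rd W(t)$; here the classical integration-by-parts rule applies because $\rd g_k=g_k'\,\rd t$ carries no quantum It\^o correction against $\rd W$. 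Since $g_k(t)=\sqrt{\tfrac{2}{T}}\tfrac{1}{\omega_k}\big(\cos(\omega_k t)-\cos(\omega_k T)\big)$ with $\cos(\omega_k T)=\cos(\pi(k+\tfrac12))=0$ by (\ref{feig}), this yields the compact representation
\[
    w_k=\int_0^T \phi_k(t)\,\rd W(t),
    \qquad
    \phi_k(t):=\omega_k g_k(t)=\sqrt{\tfrac{2}{T}}\cos(\omega_k t),
\]
and the functions $\phi_k=f_k'/\omega_k$ again form an orthonormal system in $L^2([0,T])$, i.e.\ $\bra\phi_j,\phi_k\ket=\delta_{jk}$.

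Next, for an arbitrary finite family of test vectors $u_0,\ldots,u_n\in\mR^m$, linearity of the quantum stochastic integral gives $\sum_{k=0}^n u_k^\rT w_k=\int_0^T f(t)^\rT\,\rd W(t)$ with the $\mR^m$-valued function $f:=\sum_{k=0}^n\phi_k u_k$, and orthonormality of the $\phi_k$ gives $\|f\|^2=\sum_{k=0}^n|u_k|^2$. Substituting into (\ref{QCFeig}) produces the joint QCF
\[
    \bE\exp\Big(i\sum_{k=0}^n u_k^\rT w_k\Big)
    =\exp\Big(-\tfrac12\sum_{k=0}^n|u_k|^2\Big)
    =\prod_{k=0}^n\exp\big(-\tfrac12|u_k|^2\big).
\]
The factorisation over $k$ is precisely the assertion that $w_0,\ldots,w_n$ are statistically independent in the quantum probabilistic sense, and since $n$ was arbitrary, so is the whole family $w_0,w_1,w_2,\ldots$. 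The Gaussian form of the exponent, with no linear term, identifies the joint state as Gaussian with zero mean, $\bE w_k=0$; reading off the quadratic form, the symmetric part of the quantum covariance of the stacked vector of the $w_k$ equals the identity matrix, while its antisymmetric part, being $\tfrac12\bE$ of the commutator matrix, equals $i\,\blockdiag(J,\ldots,J)$ by the CCRs (\ref{wwcomm}); adding the two parts gives the quantum covariance $\blockdiag(\Omega,\ldots,\Omega)$ with $\Omega=I_m+iJ$, which is (\ref{Omegaeig}) and (\ref{EwEwweig}). (Once Gaussianity is established, the moment identity $\bE(w_jw_k^\rT)=\delta_{jk}\Omega$ can alternatively be obtained from $\bE\big((W(s)-\cI)(W(t)-\cI)^\rT\big)=\min(s,t)\,\Omega$ via (\ref{eig})--(\ref{ffeig}), by a computation parallel to (\ref{WWcomm2eig}).)

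The routine parts are the integration by parts and the $L^2$-orthonormality bookkeeping; the step needing genuine care is the reading of the Gaussian QCF. The real exponent $-\tfrac12|u|^2$ sees only the symmetric part of the quantum covariance, because $u^\rT Ju=0$ for the antisymmetric $J$, so the imaginary part $iJ$ of $\Omega$ is invisible in the QCF and must be supplied from the commutation relations (\ref{wwcomm}); keeping straight this decomposition of the quantum covariance into its state-independent antisymmetric part and its state-dependent symmetric part is the crux, and everything else is a direct substitution into (\ref{QCFeig}).
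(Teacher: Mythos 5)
Your proposal is correct and follows essentially the same route as the paper's proof: integration by parts to write $w_k=\int_0^T \sqrt{\tfrac{2}{T}}\cos(\omega_k t)\,\rd W(t)$, then evaluating the joint QCF of a finite family via (\ref{QCFeig}) and the orthonormality of the cosines, and identifying the covariance $\Omega=I_m+iJ$ by combining the real Gaussian exponent with the CCRs (\ref{wwcomm}). Your closing remark about the QCF seeing only the symmetric part of the quantum covariance is exactly the point the paper handles by invoking (\ref{wwcomm}) when stating the covariance matrix.
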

\begin{proof}
In view of (\ref{feig}),  the integration by parts in (\ref{wWeig}) leads to
\begin{align}
\nonumber
    w_k
    & =
    \omega_k
    \sqrt{\tfrac{2}{T}}
    \int_0^T
    \sin(\omega_k t)
    (W(t)-\cI)
    \rd t\\
\nonumber
    & =
    \int_0^T
    (\cI-W(t))
    \rd
    g_k(t)\\
\label{wWeig1}
    & =
    \int_0^T
    g_k(t)
    \rd W(t),
\end{align}
where use is also made of the initial condition $W(0)=\cI$ together  with the functions
\begin{equation}
\label{gkeig}
    g_k(t)
    :=
    \sqrt{\tfrac{2}{T}}
    \cos(\omega_k t),
    \qquad
    k=0,1,2,\ldots,
\end{equation}
which satisfy $g_k(T) = \sqrt{\frac{2}{T}}\cos(\pi (k+\frac{1}{2})) = 0$ and  are also orthonormal in $L^2([0,T])$.
For any $N\>0$ and $u_0, \ldots, u_N \in \mR^m$, the joint QCF of the vectors $w_0, \ldots, w_N$ in (\ref{wWeig1}) is computed as
\begin{align}
\nonumber
    \bE \re^{i\sum_{k=0}^N u_k^\rT w_k}
    & =
    \bE \re^{i\sum_{k=0}^N u_k^\rT \int_0^T g_k(t)\rd W(t)}\\
\nonumber
    & =
    \bE \re^{i \int_0^T f(t)^\rT \rd W(t)}\\
\label{QCF1eig}
    & =
    \re^{-\frac{1}{2} \|f\|^2}.
\end{align}
Here, (\ref{QCFeig}) is used, and the function
$f: [0,T]\to \mR^m$ is given by
\begin{equation}
\label{ffueig}
    f(t) = \sum_{k=0}^N g_k(t)u_k.
\end{equation}
In view of the orthonormality of (\ref{gkeig}), it follows from (\ref{ffueig}) that
\begin{equation}
\label{fnormeig}
    \|f\|^2 = \sum_{j,k=0}^N \bra g_j,g_k\ket u_j^\rT u_k = \sum_{k=0}^N |u_k|^2.
\end{equation}
Substitution of (\ref{fnormeig}) into (\ref{QCF1eig}) shows that $w_0, \ldots, w_N$ are in a Gaussian quantum state \cite{KRP_2010} with zero mean and the joint covariance matrix $I_{N+1}\ox \Omega $, with $\Omega$ given by (\ref{Omegaeig}) in view of (\ref{wwcomm}). An equivalent form of these two moments is provided by (\ref{EwEwweig}).
\end{proof}

The matrix $\Omega$ in (\ref{Omegaeig}) is the Ito matrix of the quantum Wiener process $W$ in the sense that $\rd W \rd W^\rT = \Omega \rd t$.
A reasoning, similar to that in the proof of Theorem~\ref{th:Omegaeig}, shows that if $W$ is in a more general  Gaussian state, then so also are the vectors $w_0, w_1, w_2, \ldots$, except that the latter are no longer statistically independent.
Therefore, the representation (\ref{Weig}) relates the commutation structure and statistical properties of the quantum Wiener process $W$ with those of the coefficients $w_0, w_1, w_2, \ldots$. This representation is a quantum counterpart of the Karhunen-Loeve expansion \cite{GS_2004} of classical random processes.

\section{SINUSOIDAL EXPANSION FOR 
SOLUTIONS OF LINEAR QSDES}
\label{sec:Xsin}

Consider an OQHO, which interacts with external bosonic fields and is endowed with an even number of system variables $X_1(t), \ldots, X_n(t)$. These quantum variables are time-varying self-adjoint operators, acting on the system-field tensor-product space $\fH:= \fH_0 \ox \fF$ (where $\fH_0$ is a complex separable Hilbert space playing the role  of the initial system space for $X_1(0), \ldots, X_n(0)$). Also, the system variables satisfy the Weyl CCRs \cite{F_1989} whose infinitesimal Heisenberg form is given by
\begin{equation}
\label{XCCR}
    [X(t),X(t)^\rT] = 2i \Theta,
    \qquad
    X:=(X_k)_{1\< k\< n},
\end{equation}
for any $t\>0$,
where $\Theta$ is a constant real antisymmetric matrix of order $n$, which is assumed to be nonsingular.  The vector $X$ of the system variables evolves according to a linear QSDE
\begin{equation}
\label{dX}
    \rd X = AX \rd t + B\rd W
\end{equation}
(the time arguments are omitted for brevity),
driven by the quantum Wiener process $W$ of Section~\ref{sec:QKLW}. Here, the matrices $A \in \mR^{n\x n}$, $B\in \mR^{n\x m}$ satisfy the physical realizability (PR) condition \cite{JNP_2008}
\begin{equation}
\label{PR}
    A \Theta + \Theta A^\rT + BJB^\rT = 0,
\end{equation}
which is closely related to the preservation of the CCRs (\ref{XCCR}) in time. The property (\ref{PR}) follows from the parameterization of the matrices
\begin{equation}
\label{AB}
    A = 2\Theta (R + M^\rT JM),
     \qquad
     B = 2\Theta M^\rT
\end{equation}
in terms of the energy and coupling matrices $R = R^\rT \in \mR^{n\x n}$, $M \in \mR^{m\x n}$ which specify the system Hamiltonian $\frac{1}{2} X^\rT R X$ and the vector $MX$ of $m$ system-field coupling operators. Moreover, if the matrix $A$ in (\ref{AB}) is Hurwitz, then the CCR matrix is uniquely recovered as the solution $\Theta = \int_0^{+\infty} \re^{tA} BJB^\rT \re^{tA^\rT} \rd t$ of (\ref{PR}) as an algebraic Lyapunov equation (ALE).

In addition to the input fields $W_1, \ldots, W_m$ and the internal dynamic variables $X_1, \ldots, X_n$, the OQHO also has output field variables $Y_1, \ldots, Y_m$ whose evolution is affected  by the system-field interaction. However, the output fields will not be considered in what follows.

Now, due to linearity of the QSDE (\ref{dX}), its solution is given by
\begin{equation}
\label{Xsol}
    X(t)
    =
    \re^{tA} X_0 + \int_0^t \re^{(t-s)A} B \rd W(s),
    \qquad
    t\> 0,
\end{equation}
with its entries acting on the corresponding system-field subspace $\fH_t:= \fH_0\ox \fF_t$, where $X_0:= X(0)$ for brevity. Similarly to classical linear systems, the following Laplace transforms\footnote{note that the integrals in (\ref{XWhat}) have different structure}
\begin{equation}
\label{XWhat}
    \wh{X}(v)
    :=
    \int_0^{+\infty}
    \re^{-vt}
    X(t)
    \rd t,
    \quad
    \wh{W}(v)
    :=
    \int_0^{+\infty}
    \re^{-vt}
    \rd W(t),
\end{equation}
which are well-defined for any $v\in \mC$ satisfying $\Re v>0$ (with $A$ being Hurwitz), 
are related by
\begin{equation}
\label{XF}
    \wh{X}(v)
    =
    F(v)
    (B \wh{W}(v) +X_0),
    \qquad
    F(v):= (vI_n - A)^{-1}
\end{equation}
(see also \cite{YK_2003}). Here, $FB$ is the $\mC^{n\x m}$-valued transfer function  from $W$ to $X$, which is specified by the pair $(A,B)$. The matrices $F(u)$, $F(v) \in \mC^{n\x n}$ in (\ref{XF})
commute  with the matrix $A$ and with each other (as functions of a common matrix \cite{H_2008})  for any $u,v \in \mC$ which are not eigenvalues of $A$.

The following theorem establishes a representation for the system variables over the time interval $[0,T]$ by using the QKL expansion (\ref{Weig}) of the driving quantum Wiener process and making advantage of the sinusoidal nature of the eigenfunctions in (\ref{feig}). Its formulation employs auxiliary matrices
\begin{align}
\label{mhok}
  \mho_k
  & :=
    (\omega_k^2 I_n + A^2)^{-1},\\
\label{Ak}
  A_k
  & :=
      \sqrt{\tfrac{2}{T}}
        A
        \mho_k
        \big(
                (-1)^k
                \re^{TA}
                -
                \tfrac{1}{\omega_k}A
        \big)
\end{align}
(also commuting with each other and the matrix $A$), where $\omega_k$ are the frequencies from (\ref{feig}).

 \begin{thm}
   \label{th:Xfg}
   For the OQHO, described by (\ref{XCCR})--(\ref{AB}), with $A$ Hurwitz, the vector of the system variables can be represented as
   \begin{equation}
   \label{Xfg}
     X(t)
     =
     \xi
     +
     \sum_{k=0}^{+\infty}
     (f_k(t) \alpha_k + g_k(t)\beta_k),
     \qquad
     0\< t\< T.
   \end{equation}
   Here, the functions $f_k$, $g_k$ are given by (\ref{feig}), (\ref{gkeig}), and $\alpha_k$, $\beta_k$ are vectors of $n$ self-adjoint quantum variables which are related to the initial system variables in (\ref{Xsol}) and the QKL coefficients in (\ref{Weig}) by
   \begin{equation}
   \label{ab}
    \alpha_k
    :=
    A_k \xi + \omega_k \mho_k B w_k,
    \qquad
    \beta_k
    :=
     - A\mho_k Bw_k
   \end{equation}
   where $\xi$ is also such a vector given by
\begin{equation}
\label{xi}
    \xi
    :=
        X_0
        +
    \sqrt{\tfrac{2}{T}}
    \sum_{k=0}^{+\infty}
    A \mho_k
    B
    w_k,
\end{equation}
and use is also made of the matrices $\mho_k$, $A_k$ from (\ref{mhok}), (\ref{Ak}).
\hfill$\square$
 \end{thm}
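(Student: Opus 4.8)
\emph{Proof strategy.} The plan is to substitute the QKL expansion (\ref{Weig}) of the driving process $W$ into the variation-of-constants formula (\ref{Xsol}) for the system variables, evaluate the resulting convolution integrals in closed form, and then re-expand the remaining matrix exponential over $[0,T]$ in the sinusoidal basis. First, since $\sqrt{\lambda_k}=1/\omega_k$ and $f_k'=\omega_k g_k$ by (\ref{feig}), (\ref{lamom}), the truncations of (\ref{Weig}) are smooth with $\rd$-increments $\sum_{k\<N}g_k(s)w_k\,\rd s$; passing to the limit (using continuity of the quantum stochastic integral under uniform convergence of the integrands, exactly as for the classical Karhunen-Loeve expansion) recasts (\ref{Xsol}) as
\begin{equation*}
    X(t)=\re^{tA}X_0+\sum_{k=0}^{+\infty}\Phi_k(t)\,Bw_k,
    \qquad
    \Phi_k(t):=\int_0^t\re^{(t-s)A}g_k(s)\,\rd s.
\end{equation*}

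Next I would evaluate $\Phi_k$ in closed form --- most cleanly by two integrations by parts using $f_k'=\omega_k g_k$, $g_k'=-\omega_k f_k$, $f_k(0)=0$, $g_k(0)=\sqrt{2/T}$, which produces a pair of linear relations solvable for $\Phi_k$; alternatively by solving $\Phi_k'=g_kI_n+A\Phi_k$, $\Phi_k(0)=0$, or directly via $\int_0^t\re^{(t-s)A}\re^{\pm i\omega_k s}\rd s=(\pm i\omega_kI_n-A)^{-1}(\re^{\pm i\omega_k t}I_n-\re^{tA})$ together with $(i\omega_kI_n-A)(-i\omega_kI_n-A)=\omega_k^2I_n+A^2$. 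I expect this to give
\begin{equation*}
    \Phi_k(t)=\omega_k\mho_k f_k(t)-A\mho_k g_k(t)+\sqrt{\tfrac{2}{T}}\,A\mho_k\re^{tA},
\end{equation*}
with $\mho_k$ as in (\ref{mhok}), all matrices here being functions of $A$ and hence mutually commuting. Substituting back, using $\re^{tA}A\mho_k=A\mho_k\re^{tA}$, and collecting the $\re^{tA}$-terms into $\re^{tA}\xi$ identifies $\xi$ with (\ref{xi}) and reduces the assertion to the deterministic matrix identity
\begin{equation*}
    \re^{tA}=I_n+\sum_{k=0}^{+\infty}A_k\,f_k(t),
    \qquad 0\<t\<T,
\end{equation*}
since then the coefficients of $f_k(t)$ and $g_k(t)$ reproduce $\alpha_k$ and $\beta_k$ of (\ref{ab}).

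To establish this identity I would expand each entry of $\re^{tA}-I_n$ over $[0,T]$ in the orthonormal basis $\{f_k\}_{k\>0}$ of $L^2([0,T])$, which is complete because it is the eigenbasis of the positive-definite Wiener covariance operator in (\ref{eig}). The Fourier coefficient $\int_0^T f_k(t)(\re^{tA}-I_n)\,\rd t$ would be evaluated using $\int_0^T f_k(t)\,\rd t=\sqrt{2/T}\,\omega_k^{-1}$, the boundary values $\re^{i\omega_k T}=(-1)^k i$, and once more the relation $(\omega_k^2I_n+A^2)\mho_k=I_n$; I anticipate it equals precisely the matrix $A_k$ of (\ref{Ak}). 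Pointwise (indeed uniform) convergence of the series on all of $[0,T]$ should follow since each entry of $\re^{tA}-I_n$ is smooth and vanishes at $t=0$, matching the boundary behaviour of the $f_k$, so that the coefficients decay like $\omega_k^{-2}$.

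The part I expect to demand the most care is not the algebra but the convergence bookkeeping: justifying the interchange of the quantum stochastic integral with the QKL summation, the uniform convergence of the matrix series for $\re^{tA}$, and the definition of $\xi$ and of $\alpha_k$, $\beta_k$ as (series of) self-adjoint operators. All of these should rest on the single estimate $\|A\mho_k\|=O(\omega_k^{-2})$, hence $\sum_k\|A\mho_k\|<\infty$, together with $|f_k|,|g_k|\<\sqrt{2/T}$, in parallel with the convergence arguments underpinning the classical Karhunen-Loeve expansion.
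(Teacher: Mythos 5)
Your proposal is correct and follows essentially the same route as the paper's proof: substituting the QKL expansion of $W$ into the variation-of-constants formula (\ref{Xsol}), evaluating the convolution integrals via complex exponentials (the paper's ``complex impedance'' step, yielding exactly your $\Phi_k$ and hence the terms $\omega_k f_k(t)\mho_k B w_k - g_k(t)A\mho_k Bw_k$ plus $\re^{tA}\xi$), and then expanding $\re^{tA}$ over the basis $\{f_k\}$ with coefficients $A_k$ as in (\ref{Ak})--(\ref{exptA}). Your closed forms and the final regrouping into $\alpha_k$, $\beta_k$, $\xi$ match the paper's computation, and your convergence remarks only add bookkeeping the paper leaves implicit.
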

 \begin{proof}
 By substituting (\ref{Weig}) into (\ref{Xsol}) and using an operator version of the complex impedance technique, it follows that
 \begin{align}
\nonumber
    X(t)
    & =
    \re^{tA}
    X_0
    +
    \sqrt{\tfrac{2}{T}}
    \sum_{k=0}^{+\infty}
    \Re
    \big(
        (\re^{i\omega_k t} I_n - \re^{tA})
        F(i\omega_k)
    \big)
    Bw_k\\
\nonumber
    & =
    \re^{tA}
    \xi
    +
    \sqrt{\tfrac{2}{T}}
    \sum_{k=0}^{+\infty}
    \Re
    \big(
        \re^{i\omega_k t}
        F(i\omega_k)
    \big)
    Bw_k\\
\label{Xsol2}
    & =
    \re^{tA}\xi
    +
    \sum_{k=0}^{+\infty}
    (
    \omega_k
    f_k(t)
    I_n
    -g_k(t) A
    )
    \mho_k
    B
    w_k,
\end{align}
where $\mho_k$ are the matrices from (\ref{mhok}).
Here, we have also used
the property of the function $F$ in (\ref{XF}) that
$
    F(i\omega) F(-i\omega) = (\omega^2 I_n + A^2)^{-1}
$,
whereby
 $
    F(i\omega) = -(\omega^2 I_n + A^2)^{-1} (A+i\omega I_n)
 $ (and is  well-defined for any $\omega \in \mR$ since $A$ is Hurwitz).
  Also, $\xi$ in (\ref{Xsol2}) is a vector of $n$ self-adjoint quantum variables on the system-field space $\fH$, related to the initial system variables and the QKL coefficients as
$$    \xi
    :=
        X_0
        -
    \sqrt{\tfrac{2}{T}}
    \sum_{k=0}^{+\infty}
    \Re
        F(i\omega_k)
    Bw_k
     =
        X_0
        +
    \sqrt{\tfrac{2}{T}}
    \sum_{k=0}^{+\infty}
    A \mho_k
    B
    w_k,
$$
in accordance with (\ref{xi}).
We will now use the Fourier expansion of the fundamental matrix of the linear system $\dot{x} = Ax$ over the functions (\ref{feig}):
\begin{equation}
\label{exptA}
  \re^{tA}
  =
  I_n
  +
  \sum_{k=0}^{+\infty}
  f_k(t)A_k
  =
  I_n
  +
  \sqrt{\tfrac{2}{T}}
  \sum_{k=0}^{+\infty}
  \sin(\omega_k t)
  A_k
\end{equation}
for all $0\< t \< T$,
with the coefficients $A_k \in\mR^{n\x n}$ computed as
\begin{align}
\nonumber
    A_k
    &=
    \int_0^T
    f_k(t)(\re^{tA}-I_n)
    \rd t\\
\nonumber
    & =
          \sqrt{\tfrac{2}{T}}
    \int_0^T \sin(\omega_k t)(\re^{tA}-I_n)\rd t\\
\nonumber
    & =
          \sqrt{\tfrac{2}{T}}
          \Big(
            \Im
            \big(
                F(-i\omega_k)
                (
                    I_n
                    -
                    \re^{T (i\omega_k I_n + A)}
                )
            \big)
            -
            \tfrac{1}{\omega_k} I_n
          \Big)\\
\nonumber
    & =
      \sqrt{\tfrac{2}{T}}
      \Big(
        \mho_k
        (
                (-1)^k
                A\re^{TA}
                +
                \omega_k I_n
        )
        -
        \tfrac{1}{\omega_k}I_n
      \Big)\\
    & =
      \sqrt{\tfrac{2}{T}}
        A
        \mho_k
        \big(
                (-1)^k
                \re^{TA}
                -
                \tfrac{1}{\omega_k}A
        \big),
\end{align}
in accordance with (\ref{Ak}),
where the matrices $\mho_k$ from (\ref{mhok}) are used together with the identity $\re^{i\omega_k T} = (-1)^k i$ for the frequencies $\omega_k$ in (\ref{feig}). Substitution of (\ref{exptA}) into (\ref{Xsol2}) leads to (\ref{Xfg}), (\ref{ab}).
\end{proof}

The vectors $\alpha_k$, $\beta_k$ in (\ref{ab}) satisfy CCRs whose  structure is more complicated than that of the QKL coefficients $w_k$ in (\ref{wwcomm}) because of the presence of the vector $\xi$ given by (\ref{xi}). More precisely,
\begin{align}
\nonumber
  [\alpha_j,\beta_k^\rT]
  & =
  [A_j \xi+\omega_j \mho_j B w_j,
    ( - A\mho_k Bw_k)^\rT]\\
\nonumber
  & =
  -(A_j [\xi, w_k^\rT] + \omega_j \mho_j B[w_j,w_k^\rT])B^\rT \mho_k^\rT A^\rT\\
\label{abcomm}
  & =
  -2i\Big(\sqrt{\tfrac{2}{T}} A
    A_j \mho_k
 + \delta_{jk} \omega_j\mho_j \Big)BJB^\rT \mho_k^\rT A^\rT.
\end{align}
Here, we have used the commutativity between the initial system variables on $\fH_0$ and the operators on the Fock space $\fF$, whereby $[X_0, w_k^\rT] = 0$ for all $k=0,1,2,\ldots$, which, in view of (\ref{wwcomm}), (\ref{xi}), implies
$$
    [\xi, w_k^\rT]
     =
    [X_0,w_k^\rT]
    +
    \sqrt{\tfrac{2}{T}}
    \sum_{j=0}^{+\infty}
    A \mho_j
    B
    [w_j,w_k^\rT]
     =
    2i
    \sqrt{\tfrac{2}{T}}
    A \mho_k
    B
    J.
$$
In (\ref{abcomm}), the commutativity between the matrices $A$ and $A_k$,  given by  (\ref{Ak}),  has also been used. As opposed to (\ref{wwcomm}), the vectors $\alpha_j$, $\beta_k$, which play the role of coefficients in (\ref{Xfg}),  have a nonvanishing CCR matrix for $j\ne k$. This more complicated commutation structure comes from the fact that Theorem~\ref{th:Xfg} describes the response of the system variables $X_1, \ldots, X_n$ to the QKL expansion of the driving quantum Wiener process $W$, which employs the eigenbasis associated with $W$ rather than $X_1, \ldots, X_n$ themselves.

\section{QUANTUM KARHUNEN-LOEVE REPRESENTATION OF SYSTEM VARIABLES}
\label{sec:QKLX}

Since the matrix $A$ in (\ref{AB}) is assumed to be Hurwitz, then, in the case of vacuum input fields, the system variables $X_1, \ldots, X_n$ of the OQHO have a unique invariant multipoint Gaussian quantum state \cite{VPJ_2018a} with zero mean and the two-point quantum covariance matrix
\begin{equation}
\label{EXX}
  \bE(X(s)X(t)^\rT)
  =
  K(s-t),
  \qquad
  s,t\> 0,
\end{equation}
where
\begin{equation}
\label{K}
    K(\tau)
    =
    \left\{
    \begin{matrix}
    \re^{\tau A}V& {\rm if}\  \tau\> 0\\
    V\re^{-\tau A^{\rT}} & {\rm if}\  \tau< 0\\
    \end{matrix}
    \right.
    =
    K(-\tau)^*,
\end{equation}
with $(\cdot)^*:= {\overline{(\cdot)}}^\rT$ the complex conjugate transpose.
Here,
\begin{equation}
\label{V}
  V:= \Sigma + i\Theta
\end{equation}
is the invariant one-point quantum covariance matrix of the system variables satisfying the ALE
\begin{equation}
\label{VALE}
  AV + VA^\rT + B\Omega B^\rT = 0,
\end{equation}
whose imaginary part is equivalent to the PR condition (\ref{PR}) in view of (\ref{Omegaeig}). Accordingly, the imaginary part of (\ref{K}) describes the two-point CCR matrix
\begin{equation}
\label{XXcomm}
    [X(s), X(t)^\rT]
    =
    2i\Lambda(s-t),
\end{equation}
where
\begin{align}
\nonumber
    \Lambda(\tau)
    & :=
    \Im K(\tau)\\
\label{Lambda}
    & =
    \left\{
    \begin{matrix}
    \re^{\tau A}\Theta & {\rm if}\  \tau\> 0\\
    \Theta \re^{-\tau A^{\rT}} & {\rm if}\  \tau< 0\\
    \end{matrix}
    \right.
    =
    -\Lambda(-\tau)^\rT.
\end{align}
The commutation structure (\ref{XXcomm}), (\ref{Lambda}) of the system variables of the OQHO remains valid regardless of their particular quantum state.

In view of (\ref{VALE}), (\ref{Omegaeig}), the matrix $\Sigma = \Re V$ in (\ref{V}) is the controllability Gramian of the pair $(A,B)$ satisfying the ALE
$$
    A\Sigma + \Sigma A^\rT + BB^\rT = 0.
$$
Being a positive semi-definite Hermitian kernel,
the two-point quantum covariance function $K$ in (\ref{EXX}), (\ref{K}) specifies a positive semi-definite self-adjoint linear integral operator $\cK$ which maps a square integrable function $\varphi: [0,T] \to \mC^n$ to another such function $\psi$ as
\begin{equation}
\label{cK}
    \psi(s)
    :=
    \int_0^T K(s-t)\varphi(t)\rd t,
    \qquad
    0\< s\< T.
\end{equation}
Here, the Hilbert space $L^2([0,T],\mC^n)$ is endowed with the standard inner product
$$
    \bra f,g\ket := \int_0^T f(t)^* g(t)\rd t.
$$
The operator $\cK$,  given by (\ref{K}), (\ref{cK}), is of trace class and its kernel $K$ is represented as
\begin{equation}
\label{Khh}
    K(s-t)
    =
    \sum_{k=0}^{+\infty}
    \mu_k h_k(s) h_k(t)^*,
    \qquad
    0\< s,t \< T,
\end{equation}
in terms of orthonormal eigenfunctions $h_k: [0,T]\to \mC^n$ satisfying
\begin{equation}
\label{Keig}
    \int_0^T K(s-t)h_k(t)\rd t
    =
    \mu_k h_k(s),
    \qquad
    0\< s\< T,\
    k = 0,1,2,\ldots,
\end{equation}
where $\mu_k\>0$ are the corresponding eigenvalues, with $\sum_0^{+\infty} \mu_k = \Tr \cK = T \Tr K(0) = T\Tr \Sigma$, since $\Tr V = \Tr \Sigma$ in (\ref{V}) due to $\Tr \Theta = 0$.
Accordingly,
\begin{equation}
\label{fKf}
    \bra f, \cK g\ket
    =
    \sum_{k=0}^{+\infty}
    \mu_k
    \bra f, h_k\ket  \bra h_k, g\ket,
    \quad
    \bra f, \cK f\ket
    =
    \sum_{k=0}^{+\infty}
    \mu_k
    |\bra f, h_k\ket|^2
\end{equation}
for any $f,g \in L^2([0,T], \mC^n)$. In what follows, we will also use the $\mR^n$-valued functions $\varphi_k:= \Re h_k$, $\psi_k:= \Im h_k$, so that
\begin{equation}
\label{h}
    h_k = \varphi_k + i \psi_k.
\end{equation}
Now, let $\zeta_0, \zeta_1, \zeta_2, \ldots$ be a sequence of vectors
\begin{equation}
\label{zeta}
  \zeta_k
  :=
  \begin{bmatrix}
    \xi_k\\
    \eta_k
  \end{bmatrix}
\end{equation}
which consist of self-adjoint quantum variables $\xi_k$, $\eta_k$ on $\fH$ and satisfy the CCRs
\begin{equation}
\label{zetacomm}
  [\zeta_j,\zeta_k^\rT]
  = 2i \delta_{jk}\bJ,
  \qquad
  j,k=0,1,2,\ldots,
\end{equation}
where the matrix $\bJ$ is given by (\ref{bJ}). Up to a factor of $\sqrt{2}$, the operators $\xi_k$, $\eta_k$ are organised as the quantum mechanical positions and momenta, mentioned in Section~\ref{sec:QKLW}. This gives rise to the annihilation operators
\begin{equation}
\label{gamma}
  \gamma_k := \xi_k + i\eta_k,
\end{equation}
satisfying the CCRs
$$
    [\gamma_j, \gamma_k] = 0,
    \quad
    [\gamma_j^\dagger, \gamma_k^\dagger] = 0,
    \quad
    [\gamma_j, \gamma_k^\dagger] = 4\delta_{jk},
    \quad
    j,k=0,1,2,\ldots,
$$
where $(\cdot)^\dagger $ denotes the operator adjoint.
 Now, consider the series
\begin{align}
\nonumber
    X(t)
    & =
    \sum_{k=0}^{+\infty}
    \sqrt{\mu_k}
    \Re (h_k(t)\gamma_k)\\
\nonumber
    & =
    \sum_{k=0}^{+\infty}
    \sqrt{\mu_k}
    (\varphi_k(t)\xi_k - \psi_k(t)\eta_k)\\
\label{QKLX}
    & =
    \sum_{k=0}^{+\infty}
    \sqrt{\mu_k}
    \begin{bmatrix}
        \varphi_k(t) & - \psi_k(t)
    \end{bmatrix}
    \zeta_k,
    \qquad
    0\< t\< T,
\end{align}
defined in terms of the eigenvalues and eigenfunctions from (\ref{Keig}), (\ref{h})
and the annihilation operators (\ref{gamma}), with the real part extended from complex numbers
to operators as $\Re \xi:= \frac{1}{2}(\xi+\xi^{\dagger})$. It follows from (\ref{zetacomm}), (\ref{QKLX}) that
\begin{align}
\nonumber
    [X(s), X(t)]
    & =
    \sum_{j,k=0}^{+\infty}
    \!\!\sqrt{\mu_j\mu_k}
    {\begin{bmatrix}
        \varphi_j(s) & - \psi_j(s)
    \end{bmatrix}}
    [\zeta_j,\zeta_k^\rT]
    {\begin{bmatrix}
        \varphi_k(t)^\rT \\
         - \psi_k(t)^\rT
    \end{bmatrix}}\\
\nonumber
    & =
    2i
    \sum_{k=0}^{+\infty}
    \mu_k
    {\begin{bmatrix}
        \varphi_k(s) & - \psi_k(s)
    \end{bmatrix}}
    \bJ
    {\begin{bmatrix}
        \varphi_k(t)^\rT \\
         - \psi_k(t)^\rT
    \end{bmatrix}}    \\
\nonumber
    & =
    2i
    \sum_{k=0}^{+\infty}
    \mu_k
    (\psi_k(s)\varphi_k(t)^\rT - \varphi_k(s) \psi_k(t)^\rT)\\
\nonumber
    & =
    2i
    \sum_{k=0}^{+\infty}
    \mu_k
    \Im (h_k(s)h_k(t)^*)\\
\nonumber
    & =
    2i
    \Lambda(s-t),
    \qquad
    0\< s,t\< T,
\end{align}
where use is made of (\ref{bJ}), (\ref{Lambda}), (\ref{Khh}), (\ref{h}). Therefore, (\ref{QKLX}) has the same two-point CCRs (\ref{XXcomm}) as the system variables of the OQHO.

\begin{thm}
\label{th:QKLX}
Suppose the vectors $\zeta_k$ in (\ref{zeta}) with the CCRs (\ref{zetacomm})  are statistically independent and are in a joint Gaussian quantum  state with zero mean and common covariance matrix
\begin{equation}
\label{Gamma}
    \Gamma := I_2 + i\bJ,
\end{equation}
so that
\begin{equation}
\label{Ezzz}
    \bE \zeta_k = 0,
    \quad
    \bE(\zeta_j\zeta_k^\rT) = \delta_{jk}\Gamma,
    \qquad
    j,k=0,1,2,\ldots.
\end{equation}
Then the process (\ref{QKLX}), defined in terms of the eigenvalues and eigenfunctions (\ref{Keig}), (\ref{h}) for the kernel (\ref{K})
and the annihilation operators (\ref{gamma}), has the invariant multipoint Gaussian quantum state of the system variables of the OQHO driven by the vacuum input fields.
\hfill$\square$
\end{thm}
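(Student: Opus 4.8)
The plan is to compute the finite-dimensional quasi-characteristic functions (QCFs) of the process $X$ in (\ref{QKLX}) and to match them with those of the invariant multipoint Gaussian state of the OQHO; since a quantum Gaussian state is determined by its QCF (equivalently, by its mean and two-point covariance), this suffices. Concretely, I would verify that $X$ is Gaussian over every finite family of time instants, that $\bE X(t) = 0$, and that the real (symmetric) part of $\bE(X(s)X(t)^\rT)$ equals $\Re K(s-t)$ with $K$ from (\ref{EXX})--(\ref{K}) --- the imaginary (antisymmetric) part, that is the CCR $[X(s),X(t)^\rT] = 2i\Lambda(s-t)$, having already been established in the computation preceding the theorem. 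This parallels the proof of Theorem~\ref{th:Omegaeig}, with the joint Gaussian state (\ref{Gamma})--(\ref{Ezzz}) of the coefficients $\zeta_k$ now playing the role of the vacuum QCF (\ref{QCFeig}).

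The zero-mean property is immediate, since $\bE X(t) = \sum_k \sqrt{\mu_k}\begin{bmatrix}\varphi_k(t) & -\psi_k(t)\end{bmatrix}\bE\zeta_k = 0$ by (\ref{Ezzz}). For the rest, I would fix arbitrary times $t_1,\dots,t_N \in [0,T]$ and vectors $u_1,\dots,u_N \in \mR^n$, and compute the joint QCF of the truncations $X_N$ of the series (\ref{QKLX}). Writing $Z_N := \sum_{l=1}^N u_l^\rT X_N(t_l)$, one has $Z_N = \sum_{k=0}^N c_k^\rT \zeta_k$ with $c_k := \sqrt{\mu_k}\sum_{l=1}^N \begin{bmatrix}\varphi_k(t_l)^\rT \\ -\psi_k(t_l)^\rT\end{bmatrix} u_l \in \mR^2$. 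Using the statistical independence of the $\zeta_k$ together with the Gaussian QCF of each of them --- namely $c \mapsto \re^{-\frac{1}{2}c^\rT (\Re\Gamma)c} = \re^{-\frac{1}{2}|c|^2}$ in view of $\Re\Gamma = I_2$ in (\ref{Gamma}) --- gives
\[
    \bE\,\re^{iZ_N}
    =
    \prod_{k=0}^N \bE\,\re^{i c_k^\rT \zeta_k}
    =
    \re^{-\frac{1}{2}\sum_{k=0}^N |c_k|^2}.
\]
In particular each $Z_N$, hence also the limit $Z := \sum_{l} u_l^\rT X(t_l)$, is a zero-mean Gaussian quantum variable, so $X$ is a multipoint Gaussian process.

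It then remains to identify the limiting quadratic form. From (\ref{h}) one has $\varphi_k(s)\varphi_k(t)^\rT + \psi_k(s)\psi_k(t)^\rT = \Re(h_k(s)h_k(t)^*)$, whence, by the Mercer expansion (\ref{Khh}),
\[
    \sum_{k} |c_k|^2
    =
    \sum_{l,l'=1}^N u_l^\rT \Big(\sum_k \mu_k \Re\big(h_k(t_l)h_k(t_{l'})^*\big)\Big) u_{l'}
    =
    \sum_{l,l'=1}^N u_l^\rT \Re K(t_l - t_{l'})\, u_{l'}.
\]
Therefore $\bE\,\re^{iZ} = \exp\big({-}\tfrac{1}{2}\sum_{l,l'} u_l^\rT \Re K(t_l-t_{l'})u_{l'}\big)$. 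On the other hand, for the system variables of the OQHO in their invariant multipoint Gaussian state, $Z$ is zero-mean Gaussian with $\bE(Z^2) = \sum_{l,l'} u_l^\rT \bE(X(t_l)X(t_{l'})^\rT) u_{l'} = \sum_{l,l'} u_l^\rT K(t_l-t_{l'}) u_{l'}$, and the imaginary part of this sum cancels under the swap $l\leftrightarrow l'$ because $K(-\tau) = K(\tau)^*$ (equivalently $\Lambda(-\tau) = -\Lambda(\tau)^\rT$ in (\ref{Lambda})). Hence both QCFs equal $\exp\big({-}\tfrac{1}{2}\sum_{l,l'} u_l^\rT \Re K(t_l-t_{l'})u_{l'}\big)$ for every choice of $t_1,\dots,t_N$ and $u_1,\dots,u_N$, and by uniqueness the process (\ref{QKLX}) carries the invariant multipoint Gaussian state of the OQHO.

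The hard part is the analytic bookkeeping: justifying that the QCFs of the partial sums converge to the asserted limit and that this limit defines a genuine quantum process with the stated state. This rests on the trace-class property $\sum_k \mu_k < \infty$ and the absolute and uniform convergence of the Mercer series (\ref{Khh}), which together ensure $\sum_k |c_k|^2 < \infty$ and hence $\re^{-\frac{1}{2}\sum_{k=0}^N |c_k|^2} \to \re^{-\frac{1}{2}\sum_{k=0}^{+\infty} |c_k|^2}$; a quantum continuity argument of L\'evy--Cram\'er type then identifies the limit of the finite-dimensional QCFs with a well-defined Gaussian quantum process. By contrast, the algebraic moment matching --- the real part treated above and the imaginary part treated before the theorem --- is routine once Theorem~\ref{th:Omegaeig} and the spectral decomposition of $\cK$ are available.
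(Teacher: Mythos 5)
Your proposal is correct and follows essentially the paper's own route: exploit the statistical independence and Gaussian state (\ref{Gamma})--(\ref{Ezzz}) of the coefficients $\zeta_k$ to factor the quasi-characteristic function, then use the spectral decomposition of the kernel to identify it with that of the invariant state, with the two-point CCRs handled by the computation preceding the theorem. The only difference is cosmetic: you evaluate finite-dimensional QCFs at time points $t_1,\dots,t_N$ and match $\Re K(t_l-t_{l'})$, whereas the paper works with the $L^2$ test-function QCF $\bE\,\re^{i\int_0^T f(t)^\rT X(t)\rd t}=\re^{-\frac{1}{2}\bra f,\cK f\ket}$ via (\ref{fKf}) --- equivalent characterizations of the same multipoint Gaussian state.
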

\begin{proof}
The pairwise commutativity of the vectors (\ref{zeta}), their statistical independence and the structure (\ref{Gamma}), (\ref{Ezzz}) of their Gaussian quantum states allow the QCF of the process (\ref{QKLX}) to be computed as
\begin{align}
\nonumber
    \bE \re^{i\int_0^T f(t)^\rT X(t)\rd t}
    & =
    \bE \re^{i\sum_{k=0}^{+\infty}
    \sqrt{\mu_k}
    (\bra f,\varphi_k\ket\xi_k - \bra f,\psi_k\ket\eta_k)}\\
\nonumber
    & =
    \prod_{k=0}^{+\infty}
    \bE \re^{i    \sqrt{\mu_k}
    (\bra f,\varphi_k\ket\xi_k - \bra f,\psi_k\ket\eta_k)}    \\
\nonumber
    & =
    \prod_{k=0}^{+\infty}
    \re^{-\frac{1}{2}\mu_k
    (\bra f,\varphi_k\ket^2+\bra f,\psi_k\ket^2)}    \\
\nonumber
    & =
    \re^{-\frac{1}{2}\sum_{k=0}^{+\infty}\mu_k
    (\bra f,\varphi_k\ket^2+\bra f,\psi_k\ket^2)} \\
\label{XQCF}
    & =
    \re^{-\frac{1}{2}\bra f, \cK f\ket}
\end{align}
for any function $f \in L^2([0,T], \mR^n)$. Here, use is also made of (\ref{fKf}) together with the identity $|\bra f, h_k\ket|^2 = \bra f, \varphi_k\ket^2 +  \bra f, \psi_k\ket^2$ in view of (\ref{h}). The relation (\ref{XQCF}) establishes the multipoint Gaussian quantum state, described in the theorem, for the process $X$ in (\ref{QKLX}).
\end{proof}

The proofs of Theorems~\ref{th:Omegaeig} and \ref{th:QKLX} employ the property that linear transformations of quantum variables in Gaussian states lead to Gaussian quantum variables.
Also note that, regardless of a particular form of the eigenfunctions $h_k$ in (\ref{QKLX}),  the QKL expansion of the system variables (under the conditions of Theorem~\ref{th:QKLX}) is mean square convergent, with the remainder process  $r_N(t):=     \sum_{k=N}^{+\infty}
    \sqrt{\mu_k}
    \Re (h_k(t)\gamma_k)$ satisfying
$
    \int_0^T\bE (r_N(t)^\rT r_N(t)) \rd t
    =
    \sum_{k=N}^{+\infty}
    \mu_k
$ for any $N=0,1,2,\ldots$.  Furthermore, the relatively simple commutation structure and the statistical properties of the coefficients (\ref{zeta}) of the QKL expansion (\ref{QKLX}) of the system variables are similar to those for the QKL expansion (\ref{Weig}) of the quantum Wiener process.

\section{APPLICATION TO QUADRATIC-EXPONENTIAL FUNCTIONALS}
\label{sec:QEF}

For the OQHO, described by (\ref{XCCR})--(\ref{AB}), and assuming the time horizon $T$ to be fixed as before,
consider the following QEF \cite{VPJ_2018a}: 
\begin{equation}
\label{Xi}
    \Xi
    :=
    \bE \re^{Q}.
\end{equation}
Here, $Q$ is a positive semi-definite self-adjoint quantum variable given by 
\begin{align}
\label{Q}
    Q
    :=
    \int_0^T
    X(t)^{\rT} \Pi X(t)
    \rd t,
\end{align}
where
$\Pi$ is a real positive semi-definite symmetric matrix of order $n$. The exponential in (\ref{Xi}) is usually evaluated at $\theta Q$ (instead of $Q$), where   the factor $\theta>0$ is a risk-sensitivity parameter, which is ``absorbed'' here by the matrix $\Pi$. 
The cost functional $\Xi$ imposes an exponential penalty on $Q$ in (\ref{Q}) (which is a quadratic function of the system variables over the time interval $[0,T]$) and involves the mean square cost $\bE Q$ as its limiting case in view of  the asymptotic relation
$$
    \ln \Xi = \bE Q + o(\Pi),
    \qquad
    {\rm as}\
    \Pi \to 0.
$$
The QEF  $\Xi$ in (\ref{Xi}) (when $\theta$ is reinstated and $\Xi=\bE \re^{\theta Q}$ is considered for different $\theta> 0$)  gives rise to an upper bound \cite{VPJ_2018a} for the tail distribution  of the quantum variable $Q$. Furthermore, $\Xi$ also leads to an upper bound for the worst-case value $\sup_{\rho \in \fR}\Tr(\rho Q)$ of the mean square cost $\bE Q$ over a class $\fR$ of those actual density operators $\rho$ whose quantum relative entropy \cite{OW_2010,YB_2009} with respect to the nominal system-field state
\begin{equation}
\label{rho0}
    \rho_0:= \varpi\ox \ups
\end{equation}
does not exceed a given level \cite{VPJ_2018b}. Here, $\varpi$ is the initial system state on the space $\fH_0$, and $\ups$ is the vacuum field state on the Fock space $\fF$. This allows the QEF $\Xi$ to be used as a finite-horizon cost for a closed-loop quantum system, resulting from the connection of a quantum feedback controller and a quantum plant (both modelled as OQHOs), such as in Fig.~\ref{fig:loop}.
\begin{figure}[htbp]
\unitlength=0.9mm
\linethickness{0.4pt}
\begin{picture}(50.00,41.00)
    \put(40,25){\framebox(20,15)[cc]{{}}}
    \put(40,27){\makebox(20,15)[cc]{{\small quantum}}}
    \put(40,23){\makebox(20,15)[cc]{{\small plant}}}

    \put(40,5){\framebox(20,15)[cc]{{}}}
    \put(40,7){\makebox(20,15)[cc]{{\small quantum}}}
    \put(40,3){\makebox(20,15)[cc]{{\small controller}}}

    \put(20,33){\line(1,0){20}}
    \put(15,33){\makebox(0,0)[cc]{$W^{(1)}$}}
    \put(60,33){\line(1,0){10}}

    \put(28,31.1){\line(1,1){4}}
    \put(28,30.9){\line(1,1){4}}
    \put(68,35.1){\line(1,-1){4}}
    \put(68,34.9){\line(1,-1){4}}
    \put(28,15.1){\line(1,-1){4}}
    \put(28,14.9){\line(1,-1){4}}
    \put(68,11.1){\line(1,1){4}}
    \put(68,10.9){\line(1,1){4}}

    \put(70,33){\line(0,-1){25}}
    \put(40,13){\line(-1,0){10}}
    \put(30,13){\line(0,1){25}}
\put(80,13){\line(-1,0){20}}

    \put(85,13){\makebox(0,0)[cc]{$W^{(2)}$}}

\end{picture}\vskip-5mm
\caption{A field-mediated feedback connection of a quantum plant and a quantum controller subject to the augmented  quantum Wiener process $W:=  [{W^{(1)}}^\rT, {W^{(2)}}^\rT]^\rT$, where  $W^{(1)}$, $W^{(2)}$  represent the input fields for the plant and controller, respectively. }
\label{fig:loop}
\end{figure}
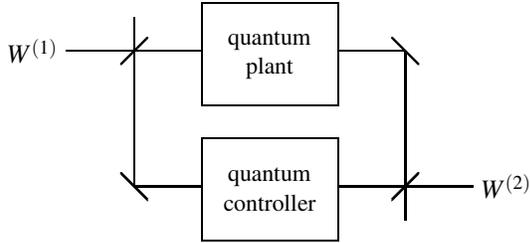
More precisely, even if $\Xi$ is evaluated at the nominal system-field density operator $\rho = \rho_0$ (with $\varpi$ in (\ref{rho0}) being, for example, the invariant Gaussian quantum state for the plant-controller system variables), the minimization of the nominal value of $\Xi$ by an appropriate choice of the controller provides a robust performance criterion for finite-horizon quantum control problems. This makes the development of state-space methods for computing the  QEF an important analysis problem.

For what follows, we assume that  the  OQHO has a Hurwitz matrix $A$ in (\ref{AB}), is driven by vacuum input fields and initialised in the invariant Gaussian quantum state. Then, by Theorem~\ref{th:QKLX},
the QKL series (\ref{QKLX}),  associated with the eigenfunctions of the invariant covariance kernel (\ref{K}), has the invariant multipoint Gaussian quantum state of the system variables. Hence, the QEF in (\ref{Xi}) can be represented by substituting (\ref{QKLX}) into (\ref{Q}):
\begin{align}
\nonumber
  Q
  &=
    \int_0^T
        \sum_{j,k=0}^{+\infty}
    \sqrt{\mu_j\mu_k}
    \zeta_j^\rT
    \begin{bmatrix}
        \varphi_j(t)^\rT \\
        - \psi_j(t)^\rT
    \end{bmatrix}
    \Pi
    \begin{bmatrix}
        \varphi_k(t) & - \psi_k(t)
    \end{bmatrix}
    \zeta_k
    \rd t\\
\label{QX}
& =
    \sum_{j,k=0}^{+\infty}
    \sqrt{\mu_j\mu_k}
    \zeta_j^\rT
    G_{jk}
    \zeta_k,
\end{align}
where
\begin{align}
\nonumber
    G_{jk}
    & :=
      \int_0^T
    \begin{bmatrix}
        \varphi_j(t)^\rT \\
        - \psi_j(t)^\rT
    \end{bmatrix}
    \Pi
    \begin{bmatrix}
        \varphi_k(t) & - \psi_k(t)
    \end{bmatrix}
    \rd t\\
\label{G}
    & =
    \begin{bmatrix}
        \bra \varphi_j, \Pi \varphi_k\ket & -\bra \varphi_j, \Pi \psi_k\ket\\
        -\bra \psi_j, \Pi \varphi_k\ket & \bra \psi_j, \Pi \psi_k\ket
    \end{bmatrix}
    =
    G_{kj}^\rT
\end{align}
are real $(2\x 2)$-matrices consisting of the weighted inner products of the real and imaginary parts of the eigenfunctions (\ref{h}). Therefore, (\ref{QX}) allows the cost functional $\Xi$ in (\ref{Xi}) to be represented as a QEF for the sequence of coefficients $\zeta_k$ of the QKL expansion (\ref{QKLX}) of the continuous-time process $X$:
\begin{equation}
\label{XiQN}
    \Xi
    =
    \bE
    \re^
    {\sum_{j,k=0}^{+\infty}
    \sqrt{\mu_j\mu_k}
    \zeta_j^\rT
    G_{jk}
    \zeta_k}
    =
    \lim_{N\to +\infty}
    \Xi_N,
\end{equation}
where
\begin{equation}
\label{XiN}
    \Xi_N
    :=
    \bE
    \re^{Q_N},
\end{equation}
with
\begin{equation}
\label{QN}
    Q_N
    :=
        {\sum_{j,k=0}^{N-1}
    \sqrt{\mu_j\mu_k}
    \zeta_j^\rT
    G_{jk}
    \zeta_k}
\end{equation}
being a truncation of the infinite series in (\ref{QX}).   In addition to providing a ``meshless discretization'' for the QEF $\Xi$, the representations (\ref{XiQN})--(\ref{QN}) employ quadratic forms of statistically independent Gaussian vectors $\zeta_0, \zeta_1, \zeta_2, \ldots$   which have relatively simple commutation and covariance structures (\ref{zetacomm}), (\ref{Ezzz}).

Now, the computation of the ``incomplete'' QEF $\Xi_N$ in (\ref{XiN}), which involves only a finite number of quantum variables, can be carried out by using the results of \cite[Section~7]{VPJ_2018c}. To this end, (\ref{QN}) is represented for any $N=1,2,3,\ldots$ as
\begin{equation}
\label{QUps}
    Q_N = \Ups_N^\rT H_N \Ups_N,
    \quad
    \Ups_N:= \begin{bmatrix}
      \zeta_0\\
      \vdots\\
      \zeta_{N-1}
    \end{bmatrix}
     = [\xi_0, \eta_0, \ldots, \xi_{N-1},\eta_{N-1}]^\rT,
\end{equation}
where the vector $\Ups_N$ consists of $2N$ self-adjoint quantum variables  from (\ref{zeta}) and satisfies the CCRs
\begin{equation}
\label{Upscomm}
  [\Ups_N, \Ups_N^\rT] = 2i J_N,
  \qquad
  J_N:= I_N \ox \bJ
\end{equation}
in view of (\ref{zetacomm}). Also,
\begin{equation}
\label{HN}
    H_N
    :=
    (\sqrt{\mu_j\mu_k}
    G_{jk})_{0\< j,k< N}
\end{equation}
is a real positive semi-definite symmetric matrix of order $2N$, which is assumed to be nonsingular for what follows.  Since $H_N \succ 0$, Williamson's symplectic diagonalization  theorem \cite{W_1936,W_1937} (see also pp. 244--245 of \cite{D_2006}) guarantees the existence of a symplectic matrix $U_N \in \mR^{2N\x 2N}$ (satisfying $U_N J_N U_N^\rT= J_N$, with the symplectic structure matrix $J_N$ from (\ref{Upscomm})) such that
\begin{equation}
\label{UHU}
    U_N^\rT H_NU_N = S_N\ox I_2,
    \qquad
    S_N
    := \diag_{1\< k\< N}(\sigma_k),
\end{equation}
where $\sigma_1, \ldots, \sigma_N$ are positive real numbers (the symplectic eigenvalues of the matrix $H_N$).  The vector 
$$
  Z_N:= U_N^{-1}\Ups_N
$$
inherits the CCR matrix $J_N$ from $\Ups_N$ in (\ref{QUps}), and, in view of (\ref{Gamma}), (\ref{Ezzz}), its quantum covariance matrix takes the form
\begin{align}
\nonumber
    C_N
    & :=
    \bE(Z_NZ_N^\rT)\\
\nonumber
     & =
    U_N^{-1} (I_{2N} + iJ_N) U_N^{-\rT}\\
\label{CN}
    &
    =
    (U_N^\rT U_N)^{-1} + iJ_N,
\end{align}
where $(\cdot)^{-\rT}:= ((\cdot)^{-1})^\rT$. In order to formulate the theorem below, we associate
\begin{equation}
\label{sympfactk}
    a_k
    =
    \frac{1}{2}
    \tanh (2\sigma_k),
    \quad
   b_k
     =
    \frac{1}{2}
    \sinh(4\sigma_k),
    \quad
    k = 1,\ldots,N,
\end{equation}
with the symplectic spectrum of $H_N$ in (\ref{UHU}), and define auxiliary matrices
\begin{equation}
\label{PhiPsi}
      \Phi_N:=
    I_N
    \ox
    \begin{bmatrix}
      1 & 0\\
      0 & 1\\
      1 & 0
    \end{bmatrix},
    \qquad
    \Psi_N
    :=
    \blockdiag_{1\< k\< N}
    (a_k, b_k, a_k).
\end{equation}
Also, for any matrix $D:= (d_{jk})_{1\< j,k\< s}$, we denote by $D^\diam$ the  matrix of the same order with the entries
\begin{equation}
\label{diam}
(D^\diam)_{jk}
    :=
    \left\{
        \begin{matrix}
            d_{jk} &  {\rm if}\  j \< k\\
            d_{kj} &  {\rm if}\  j > k
        \end{matrix}
    \right.,
\end{equation}
so that $D^\diam$ is a symmetric matrix which inherits its upper triangular part (including the main diagonal) from $D$. The following theorem is established by applying \cite[Theorem~7.1]{VPJ_2018c}.

\begin{thm}
\label{th:QEF}
Suppose the matrix $H_N$ in (\ref{HN}) is positive definite, and its symplectic eigenvalues $\sigma_1, \ldots, \sigma_N$ and the symplectic matrix $U_N$ in (\ref{UHU}) satisfy
\begin{equation}
\label{mhoReL}
    \br((U_N^\rT U_N)^{-1}\blockdiag_{1\< k\< N}(2a_k, b_k)) < 1,
\end{equation}
where $\br(\cdot)$ is the spectral radius. Then the incomplete QEF in (\ref{XiN}) can be computed as
\begin{equation}
\label{XiNex}
    \Xi_N
    =
  \frac{1}{\sqrt{\det(I_{3N} - (\Phi_N C_N\Phi_N^\rT)^{\diam} \Psi_N )}}
\end{equation}
in terms of (\ref{CN})--(\ref{diam}).\hfill$\square$
\end{thm}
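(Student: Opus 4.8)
The plan is to recognise $\Xi_N$ as a quadratic-exponential functional of a finite tuple of jointly Gaussian quantum variables whose underlying quadratic form has already been brought to Williamson normal form, and then to apply \cite[Theorem~7.1]{VPJ_2018c} directly. First I would assemble the ingredients. By (\ref{Ezzz}), the vector $\Ups_N$ of (\ref{QUps}) has zero mean and covariance matrix $\bE(\Ups_N\Ups_N^\rT) = I_N\otimes\Gamma = I_{2N} + iJ_N$, and, since the $\zeta_k$ are jointly Gaussian, so is $\Ups_N$; together with the CCRs (\ref{Upscomm}), this makes $Q_N = \Ups_N^\rT H_N\Ups_N$ a quadratic form of a zero-mean Gaussian quantum vector with identity real part of the covariance and CCR matrix $2iJ_N$.

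Next I would carry out the symplectic reduction. Using the Williamson matrix $U_N$ from (\ref{UHU}) and the symplectic identity $U_NJ_NU_N^\rT = J_N$, I would pass to $Z_N := U_N^{-1}\Ups_N$, which retains the CCR matrix $2iJ_N$, acquires the covariance $C_N = (U_N^\rT U_N)^{-1} + iJ_N$ of (\ref{CN}), and turns $Q_N$ into $Z_N^\rT(S_N\otimes I_2)Z_N = \sum_{k=1}^N\sigma_k\,z_k^\rT z_k$, a sum of mode-wise ``oscillator-energy'' terms weighted by the positive symplectic eigenvalues $\sigma_k$. The modes commute with one another but are correlated because $\Re C_N = (U_N^\rT U_N)^{-1}$ is not block-diagonal in general.

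I would then invoke \cite[Theorem~7.1]{VPJ_2018c} for the pair $(Z_N, S_N\otimes I_2)$: that theorem evaluates $\bE\,\re^{Z_N^\rT(S_N\otimes I_2)Z_N}$ in closed form for a Gaussian quantum state, under a convergence condition bounding the spectral radius of the product of $\Re C_N$ and a diagonal matrix formed from the symplectic spectrum. The task here is to check that this condition specialises exactly to (\ref{mhoReL}): the substitution $\sigma_k\mapsto(a_k,b_k)$ in (\ref{sympfactk}) is precisely the $\tanh/\sinh$ reparameterisation used in \cite[Section~7]{VPJ_2018c}, and $\blockdiag_{1\<k\<N}(2a_k,b_k)$ is the $2N$-dimensional fold-back of the per-mode triple $(a_k,b_k,a_k)$, the two copies of $a_k$ combining into $2a_k$. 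Under (\ref{mhoReL}) the theorem returns $\Xi_N = 1/\sqrt{\det(I_{3N} - M_N)}$ for a matrix $M_N$ of order $3N$.

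Finally I would match $M_N$ with $(\Phi_N C_N\Phi_N^\rT)^{\diam}\Psi_N$: the lifting $\Phi_N$ of (\ref{PhiPsi}) assigns to each mode the three second-moment channels arising when $\re^{\sigma_k z_k^\rT z_k}$ is put in Weyl-ordered form (hence $a_k$ twice and $b_k$ once in $\Psi_N$), while the $\diam$-symmetrisation (\ref{diam}) records the operator ordering fixed by the CCR matrix $J_N$ of $Z_N$; verifying that $\Phi_N C_N\Phi_N^\rT$, after this symmetrisation and multiplication by $\Psi_N$, reproduces verbatim the matrix inside the determinant of \cite[Theorem~7.1]{VPJ_2018c} completes the proof. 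The main obstacle is entirely this notational reconciliation rather than any new estimate: one must instantiate the lifting, the $\diam$ operation, and the diagonal $\Psi_N$ with the exact index conventions of \cite{VPJ_2018c}, and track the factor of $2$ in (\ref{mhoReL}) consistently when moving between the $2N$-dimensional convergence criterion and the $3N$-dimensional determinant formula; the Gaussianity of $\Ups_N$, the symplectic invariance of the CCR matrix, and the evaluation of $C_N$ are already in hand from (\ref{Ezzz})--(\ref{CN}).
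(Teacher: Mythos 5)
Your proposal follows exactly the route the paper takes: the setup of $\Ups_N$, the Williamson diagonalization $Z_N = U_N^{-1}\Ups_N$ with covariance $C_N = (U_N^\rT U_N)^{-1} + iJ_N$, the $\tanh/\sinh$ reparameterisation (\ref{sympfactk}), and the lifting via $\Phi_N$, $\Psi_N$ and the $\diam$ operation are precisely the paper's preparation, after which the result is obtained, as you propose, by a direct application of Theorem~7.1 of \cite{VPJ_2018c} with condition (\ref{mhoReL}) as its convergence hypothesis. The remaining work you flag (instantiating that theorem's index conventions) is exactly what the paper also leaves to the cited reference, so your argument is correct and essentially identical to the paper's.
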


The condition (\ref{mhoReL}) (which reflects the ``smallness'' of the matrix $\Pi$ in (\ref{Q}) needed for the QEF $\Xi$ in (\ref{Xi}) and its approximations $\Xi_N$ in (\ref{XiQN}) to be finite) and the representation (\ref{XiNex}) admit a recursive form with respect to $N=1,2,3,\ldots$. In view of (\ref{HN}), application of Theorem~\ref{th:QEF} involves the matrices (\ref{G}) and requires the knowledge of the eigenvalues and eigenfunctions for the invariant covariance kernel (\ref{K}) of the system variables of the OQHO on the time interval $[0,T]$. The eigenanalysis problem (\ref{Keig}) can be tackled by using the matrix exponential structure of the covariance function $K$, which will be discussed elsewhere.

\section{CONCLUSION}
\label{sec:conc}

We have considered a quantum counterpart of the Karhunen-Loeve expansion for the quantum Wiener processes and for system variables of an OQHO, driven by vacuum fields.
A sinusoidal expansion has been obtained for the system variables as their response to the QKL representation of the driving Wiener process. We have also discussed a more natural QKL expansion of the system variables using the eigenvalues and eigenfunctions of the invariant covariance kernel. The common feature of these QKL expansions is the orthonormality of the basis functions and statistical independence of the pairwise commuting Gaussian coefficients each of which consists of conjugate pairs of noncommuting position and momentum operators. We have outlined an application of the QKL representation of the system variables to computing the QEF as a finite-horizon robust performance criterion for linear quantum stochastic systems.


\begin{thebibliography}{99}{\scriptsize
\bibitem{CH_2013}
S.Chen, and R.L.Hudson, Some properties of quantum L\'{e}vy area in Fock and non-Fock quantum stochastic calculus, \textit{Prob.
Math. Stat.}, vol. 33, no. 2, 2013, pp. 425--434.
\bibitem{D_2006}
M.~de Gosson, \textit{Symplectic Geometry and Quantum Mechanics}, Birkh$\ddot{\rm a}$user, Basel, 2006.
\bibitem{F_1989}
G.B.Folland, \emph{Harmonic Analysis in Phase Space}, Princeton University Press, Princeton, 1989.
\bibitem{GS_2004}
I.I.Gikhman, and A.V.Skorokhod,
\textit{The Theory of Stochastic Processes}, vol. I, Springer, Berlin,
2004.
\bibitem{H_2008}
N.J.Higham,
\textit{Functions of Matrices},
SIAM, Philadelphia,  2008.
\bibitem{HP_1984}
R.L.Hudson,  and K.R.Parthasarathy,
Quantum Ito's formula and stochastic evolutions,
\textit{Commun. Math. Phys.}, vol.  93, 1984, pp. 301--323
\bibitem{H_2018}
R.L.Hudson, A short walk in quantum probability,
\textit{Philos. Trans. R. Soc. A}, vol.  376, 2018, pp. 1--13.


\bibitem{J_2004}
M.R.James, Risk-sensitive optimal control of quantum systems,
\emph{Phys. Rev. A},  vol. 69, 2004, pp. 032108-1--14.
\bibitem{J_2005}
M.R.James, A quantum Langevin formulation of risk-sensitive optimal control,
\emph{J. Opt. B}, vol. 7, 2005, pp. S198--S207.


\bibitem{JNP_2008}
M.R.James, H.I.Nurdin, and I.R.Petersen,
$H^{\infty}$ control of
linear quantum stochastic systems,
\textit{IEEE Trans.
Automat. Contr.}, vol. 53, no. 8, 2008, pp. 1787--1803.
\bibitem{KS_1991}
I.Karatzas, and S.E.Shreve,
\emph{Brownian Motion and Stochastic Calculus}, 2nd Ed.,
Springer, New York, 1991.
\bibitem{M_1995}
P.-A.Meyer,
\textit{Quantum Probability for Probabilists},
Springer, Berlin, 1995.
\bibitem{M_1950}
S.G.Mikhlin, Variational methods of solution of problems of mathematical physics, \textit{Uspekhi Mat. Nauk}, vol. 5, no. 6(40), 1950, pp. 3--51.
\bibitem{NY_2017}
H.I.Nurdin, and N.Yamamoto,
\textit{Linear Dynamical Quantum Systems},
Springer, Netherlands, 2017.

\bibitem{OW_2010}
M.Ohya, and N.Watanabe,
Quantum entropy and its applications to quantum
communication and statistical physics,
\textit{Entropy}, vol.  12, 2010, pp. 1194--1245.

\bibitem{PS_1972}
K.R.Parthasarathy, and K.Schmidt,
\emph{Positive Definite Kernels, Continuous Tensor Products, and Central Limit Theorems of Probability Theory},
Springer-Verlag, Berlin, 1972.
\bibitem{P_1992}
K.R.Parthasarathy,
\textit{An Introduction to Quantum Stochastic Calculus},
Birkh\"{a}user, Basel, 1992.

\bibitem{KRP_2010}
K.R.Parthasarathy,
What is a Gaussian state?
\textit{Commun. Stoch. Anal.}, vol. 4, no. 2, 2010, pp. 143--160.
\bibitem{P_2015}
K.R.Parthasarathy,
Quantum stochastic calculus and quantum Gaussian processes,
\textit{Indian J. Pure Appl. Math.}, vol. 46, no. 6, 2015, pp. 781--807.
\bibitem{PS_2015}
K.R.Parthasarathy, and R.Sengupta, From particle counting to Gaussian tomography, \textit{Inf. Dim. Anal., Quant. Prob. Rel.
Topics}, vol. 18, no. 4, 2015, pp. 1550023.
\bibitem{P_2017}
I.R.Petersen,
Quantum linear systems theory,
\textit{Open Automat. Contr. Syst. J.},
vol. 8, 2017, pp. 67--93.
\bibitem{S_1994}
J.J.Sakurai,
\textit{Modern Quantum Mechanics},
 Addison-Wesley, Reading, Mass., 1994.
\bibitem{V_2002}
V.S.Vladimirov.
\textit{Methods of the Theory of Generalized Functions},
Taylor \& Francis, London,  2002.

\bibitem{VPJ_2018a}
I.G.Vladimirov, I.R.Petersen, and M.R.James, Multi-point Gaussian states, quadratic–exponential cost functionals, and large deviations estimates for linear quantum stochastic systems, \textit{Appl. Math. Optim.}, 2018, pp. 1--55.

\bibitem{VPJ_2018b}
I.G.Vladimirov, I.R.Petersen, and M.R.James,
Risk-sensitive performance criteria and robustness of quantum systems with a relative entropy description of state uncertainty, 23rd International Symposium on Mathematical Theory of Networks and Systems (MTNS 2018),
Hong Kong University of Science and Technology, Hong Kong, July 16-20, 2018, pp. 482--488.

\bibitem{VPJ_2018c}
I.G.Vladimirov, I.R.Petersen, and M.R.James, Parametric randomization,  complex symplectic factorizations, and quadratic-exponential functionals for Gaussian quantum states,
 	{\tt arXiv:1809.06842 [quant-ph]}.

\bibitem{VPJ_2019}
I.G.Vladimirov, I.R.Petersen, and M.R.James,
 Lie-algebraic connections between two classes of risk-sensitive performance
criteria for linear quantum stochastic systems, accepted to SIAM Conference on Control and Its Applications (CT19), June 19--21, 2019, Chengdu, China, {\tt  	arXiv:1903.00710 [math-ph]}.

\bibitem{W_1936}
J.Williamson, On the algebraic problem concerning the normal forms of linear dynamical systems, \textit{Am. J. Math.}, vol. 58,
no. 1, 1936, pp. 141--163.

\bibitem{W_1937}
J.Williamson, On the normal forms of linear canonical transformations in dynamics, \textit{Am. J. Math.}, vol. 59, no. 3, 1937, pp. 599--617.


\bibitem{YB_2009}
N.Yamamoto, and L.Bouten,
Quantum risk-sensitive estimation and robustness,
\emph{IEEE Trans. Automat. Contr.}, vol. 54, no. 1, 2009, pp. 92--107.


\bibitem{YK_2003}
M.Yanagisawa, H.Kimura, Transfer function approach to quantum control --
part II: control concepts and applications, \textit{IEEE Trans. Automat. Contr.},  vol. 48, no. 12,
2003, pp. 2121--2132.



}
\end{thebibliography}
\end{document}